\documentclass[a4paper,11pt]{amsart}

\makeatletter
\newcommand{\addresseshere}{%
  \enddoc@text\let\enddoc@text\relax
}
\makeatother

\author[A.~Bazzanella]{Alice Bazzanella}
\author[C.~Sanna]{Carlo Sanna$^\dagger$}
\thanks{$\dagger\,$C.~Sanna is a member of GNSAGA of INdAM and of CrypTO, the group of Cryptography and Number~Theory of Politecnico di Torino}
\address{\parbox{\linewidth}{
Department of Mathematical Sciences, Politecnico di Torino\\
Corso Duca degli Abruzzi 24, 10129 Torino, Italy\\[-8pt]}}
\email{carlo.sanna@polito.it}

\keywords{Gelfond--Shnirelman--Nair method, Integer Chebyshev Problem, integer polynomials, prime numbers}
\subjclass[2020]{Primary: 11A41, Secondary: 11A99, 11C08}

\title[Another factor of integer polynomials with minimal integrals]{Another factor of integer polynomials with\\ minimal integrals}

\usepackage{amsmath}
\usepackage{amssymb}
\usepackage{amsthm}
\usepackage{soul}
\usepackage[left=1.15in, right=1.15in, top=.72in, bottom=.72in]{geometry}
\usepackage[colorlinks=true]{hyperref}
\usepackage{cleveref}
\usepackage{enumitem}
\setlist[enumerate]{label=(\roman*),labelindent=1em,itemsep=0.5em,topsep=0.5em}

\usepackage{listings}
\usepackage{xcolor}
\DeclareMathOperator{\lcm}{lcm}

\newtheorem{theorem}{Theorem}[section]

\newtheorem{lemma}[theorem]{Lemma}
\theoremstyle{remark}

\begin{document}

\begin{abstract}
	Let $N$ be a positive integer and let $S_N$ be the set of polynomials with integer coefficients, degree less than $N$, and minimal positive integral over $[0,1]$.
    D.~Bazzanella initiated the study of $S_N$ because of its relation to the distribution of prime numbers.
    Indeed, it is possible to prove that $\sum_{p^m \leq N} \log p = -\log \int_0^1 P(x) \mathrm{d} x$ for every $P \in S_N$, where the sum runs over prime numbers $p$ and positive integers $m$ such that $p^m \leq N$.
    For each real number $t$, let $\lfloor t \rfloor$ denote the maximal integer not exceeding $t$.
    The main result of this paper states that there exist infinitely many polynomials $P \in S_N$ such that $\big(x^3(1 - x)^2\big)^{\lfloor N / 6 \rfloor}$ divides $P(x)$ in $\mathbb{Z}[x]$.
    This improves upon a similar result of Sanna, who proved the same claim but with the lower-degree polynomial $\big(x(1-x)\big)^{\lfloor N / 3 \rfloor}$ in place of $\big(x^3(1 - x)^2\big)^{\lfloor N / 6 \rfloor}$.
\end{abstract}

\maketitle

\section{Introduction}

In 1896 Hadamard and de~la~Vall\'ee Poussin independently proved the Prime Number Theorem, which is equivalent to the assertion that $\psi(N) \sim N$ as $N \to \infty$.
Here $\psi$ is the second Chebyshev function, which is defined by
\begin{equation*}
    \psi(N) := \sum_{p^m \,\leq\, N} \log p
\end{equation*}
for every positive integer $N$, where the sum runs over prime numbers $p$ and positive integers $m$ such that $p^m \leq N$.

In 1936 Gelfond and Shnirelman proposed an elementary method to prove inequalities of the form $\psi(N) > C N$ for all integers $N > N_0$, where $C$ and $N_0$ are effectively computable positive constants (see Gelfond's editorial remarks in the 1944 edition of Chebyshev's Collected Works \cite[p.\ 287--288]{MR12075}).
In 1982 Nair~\cite{MR657495,MR643279} rediscovered and further developed this method.

The Gelfond--Shnirelman--Nair method works as follows.
Let $P_N(x) = \sum_{n = 0}^{N - 1} c_n x^n$ (\mbox{$c_n \in \mathbb{Z}$}) be a polynomial with integer coefficients and degree less than $N$.
Moreover, set $I(P) := \int_0^1 P(x)\mathrm{d}x$ for every polynomial $P \in \mathbb{Z}[x]$.
Then
\begin{equation*}
    I(P_N) = \sum_{n \,=\, 0}^{N - 1} \frac{c_n}{n + 1}
\end{equation*}
is a rational number whose denominator divides the lowest common multiple
\begin{equation*}
    \ell_N := \lcm\{1,2,\dots, N\} .
\end{equation*}
Suppose that $I(P_N) \neq 0$.
Hence $\ell_N |I(P_N)| \geq 1$.
Furthermore, since $\psi(N) = \log \ell_N$, the inequality
\begin{equation*}
    \psi(N) \geq \log\!\left(\frac1{|I(P_N)|}\right)
\end{equation*}
follows.
In turn $|I(P_N)| \leq \|P_N\|$, where
\begin{equation*}
    \|P\| := \max_{x \,\in\, [0,1]} |P(x)|
\end{equation*}
is the uniform norm of a polynomial $P \in \mathbb{Z}[x]$.
Therefore
\begin{equation}\label{equ:psi-lower-bound-norm}
    \psi(N) \geq \log\!\left(\frac1{\|P_N\|}\right) .
\end{equation}
At this point, it is a matter of choosing a polynomial $P_N$ that has a sufficiently small (positive) norm.
For every real number $t$, let $\lfloor t \rfloor$ denote the maximal integer that does not exceed $t$.
Plugging the polynomial
\begin{equation*}
    P_N(x) = \big(x(1-x)\big)^{\lfloor (N-1) / 2 \rfloor}
\end{equation*}
into \eqref{equ:psi-lower-bound-norm} yields the inequality
\begin{equation*}
    \psi(N) \geq (\log 2) (N - 2) > 0.6931 \cdot (N - 2) .
\end{equation*}
Other polynomials produce stronger inequalities.

This motivated the so-called Integer Chebyshev Problem~\cite{MR1333305}, that is, the study of the quantities
\begin{align*}
    m_N &:= \min\!\big\{\|P\| : P \in \mathbb{Z}[x],\, P \neq 0,\, \deg(P) < N \big\} , \\
    C_N &:= \frac1{N} \log\!\left(\frac1{m_N}\right) ,
\end{align*}
and the set of polynomials
\begin{equation*}
    T_N := \big\{P \in \mathbb{Z}[x] : \|P\| = m_N,\, \deg(P) < N \big\} .
\end{equation*}
In particular, in 1988 Aparicio~\cite{MR968933} proved the following theorem regarding the factors of polynomials in $T_N$.

\begin{theorem}\label{thm:aparicio}
    There exist three constants
    \begin{equation*}
        \lambda_1 \in [0.1456, 0.1495], \quad
        \lambda_2 \in [0.0166, 0.0187], \quad
        \lambda_3 \in [0.0037, 0.0053]
    \end{equation*}
    such that, for every sufficiently large integer $N$ and each $P \in T_N$, the polynomial
    \begin{equation*}
        \big(x(1 - x)\big)^{\lfloor \lambda_1 N \rfloor} (2x - 1)^{\lfloor \lambda_2 N \rfloor} (5x^2 - 5x + 1)^{\lfloor \lambda_3 N \rfloor}
    \end{equation*}
    divides $P(x)$ in $\mathbb{Z}[x]$.
\end{theorem}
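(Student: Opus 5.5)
The plan is to reduce everything to one principle. A nonzero algebraic integer quantity attached to $P$ has absolute value at least $1$. Analytic inequalities (Markov/Chebyshev-type) bound that same quantity by $\|P\|$ times a factor growing like $e^{O(k\log(N/k))}$. Combined with $m_N \le e^{-cN}$ for some explicit $c>0$ (e.g.\ from any known small-norm polynomial, such as the one giving $\psi(N)\ge(\log 2)(N-2)$, or better ones), this forces the multiplicity $k$ of each factor to be at least a fixed proportion of $N$.

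\emph{The factor $x(1-x)$.} Take $P\in T_N$ and let $k$ be the order of vanishing of $P$ at $0$, so $c_k\neq 0$ is an integer and $|c_k|\ge 1$. The coefficient of $x^k$ of a polynomial of degree $<N$ is bounded by $\|P\|$ times the corresponding coefficient of the shifted Chebyshev polynomial $T_{N-1}(2x-1)$. That coefficient is explicitly $\approx \binom{N+k}{2k}4^k$, giving
\[
1\le m_N\binom{N+k}{2k}4^k .
\]
Writing $k=\lambda N$ and using $\log\binom{N+k}{2k}\sim N\,(1+\lambda)H\!\big(\tfrac{2\lambda}{1+\lambda}\big)$, the inequality fails unless $\lambda$ exceeds the root of an explicit entropy equation. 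This gives $\lambda_1\ge 0.1456$ after inserting the best known value of $\liminf C_N$. By the symmetry $x\mapsto 1-x$, which preserves $T_N$, the same multiplicity holds at $1$.

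\emph{The factors $2x-1$ and $5x^2-5x+1$.} Write $P=(x(1-x))^{a}R$ with $a\ge\lfloor\lambda_1N\rfloor$. Let $Q$ be $2x-1$ or $5x^2-5x+1$, with leading coefficient $q$ and degree $d$, and let $j$ be minimal with $Q\nmid R^{(j)}$ (after dividing out the appropriate power of $Q$). Then the resultant $\mathrm{Res}(Q,R^{(j)}/j!)$ is a nonzero integer, so
\[
\prod_{Q(\alpha)=0}\big|R^{(j)}(\alpha)/j!\big|\ \ge\ q^{-\deg R}.
\]
The left side is bounded via Markov-type inequalities at the interior points $\alpha\in(0,1)$, together with the bound $\|R\|\lesssim\|P\|\cdot(\min_{\text{near }\alpha}x(1-x))^{-a}$ on a subinterval around $\alpha$. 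This yields $j\ge\lambda_i N$ for explicit $\lambda_2,\lambda_3$. The \emph{upper} bounds on the $\lambda_i$ come from the other direction. One exhibits explicit polynomials (products of the factors with suitable exponents) whose norms are smaller than any polynomial divisible by higher powers could be. Equivalently, one solves the weighted Chebyshev problem numerically and shows extremal multiplicities cannot exceed the stated proportions.

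The main obstacle is the second step. The lower bounds for $2x-1$ and $5x^2-5x+1$ require careful bookkeeping of the loss $q^{-\deg R}$ against the gain from the already-extracted factor $(x(1-x))^{a}$. Getting the constants to the stated precision also requires numerically optimized weighted-norm estimates rather than crude Markov bounds. Defining $\lambda_i$ as genuine constants, independent of $N$, requires a limiting argument via subadditivity of $\log m_N$.
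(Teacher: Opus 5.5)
The paper gives no proof of this statement; it quotes it from Aparicio's 1988 paper, so your proposal is judged on its own. Your first step is sound. The endpoint Markov inequality gives $1\le |c_k|\le \frac{n}{n+k}\binom{n+k}{2k}4^k\,m_N$ with $n=\deg P<N$. At $\lambda=0.1456$ the exponent $(1+\lambda)H\big(\frac{2\lambda}{1+\lambda}\big)+2\lambda\log 2$ is about $0.8513$, which is below $C\ge 0.8599$. So the order of $P$ at $0$, and by $x\mapsto 1-x$ also at $1$, is at least $\lfloor 0.1456N\rfloor$ for large $N$. Note also that the statement is purely existential, so it suffices to show that the left endpoints work. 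Your paragraphs on upper bounds and on subadditivity are not needed. Worse, the ``other direction'' is false for $\lambda_2,\lambda_3$: the estimates below still give a contradiction for $k\le 0.0187N$ and for $k\le 0.0053N$ respectively.

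The genuine gap is the second step. For $2x-1$ and $5x^2-5x+1$, the constants $0.0166$ and $0.0037$ are the whole content, and you never derive them. ``This yields $j\ge\lambda_iN$'' is only asserted, and the precision is deferred to unspecified weighted-norm numerics. The mechanism is also misjudged in two ways. First, your bound for $R=P/(x(1-x))^a$ near $\alpha$ is $\|P\|$ times about $(\alpha(1-\alpha))^{-a}$; that is a loss traded against the lower degree, not a gain. Second, taking the resultant with $R^{(j)}/j!$ and exponent $\deg R$ throws away a factor exponential in $j$; use $S=P/Q^k$ instead.

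No extraction and no numerics are needed. The V.~Markov coefficient bound from step one, applied at an interior point, is enough with room to spare.

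For $Q=2x-1$ with exact multiplicity $k$, write $P=(2x-1)^kS$ with $S\in\mathbb{Z}[x]$. Then $2^{n-k}S(1/2)$ is a nonzero integer. Also $S(1/2)$ is the coefficient of $y^k$ in $P\big(\frac{1+y}{2}\big)$, which V.~Markov bounds by $2^k\binom{\lfloor (n+k)/2\rfloor}{k}m_N$. Hence
\[
m_N\ \ge\ 2^{-N}\binom{\lfloor (N+k)/2\rfloor}{k}^{-1} .
\]
For $k\le 0.0166N$ the right side is at least $e^{-0.77N}$, which contradicts $C\ge 0.8599$.

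For $Q=5x^2-5x+1$, the roots are $\alpha_\pm=\frac12\pm\frac1{2\sqrt5}$ and $Q'(\alpha_\pm)=\pm\sqrt5$. The resultant $\mathrm{Res}(Q,S)=5^{n-2k}S(\alpha_+)S(\alpha_-)$ is a nonzero integer, and $P^{(k)}(\alpha_\pm)/k!=Q'(\alpha_\pm)^kS(\alpha_\pm)$. Together these give $\prod_\pm\bigl|P^{(k)}(\alpha_\pm)/k!\bigr|\ge 5^{3k-n}$. V.~Markov on $[\alpha_\pm-\delta,\alpha_\pm+\delta]$ with $\delta=\alpha_-$ gives $\bigl|P^{(k)}(\alpha_\pm)/k!\bigr|\le (2/\delta)^k\binom{\lfloor(n+k)/2\rfloor}{k}m_N$. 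For $k\le 0.0037N$ this forces $m_N\ge e^{-0.83N}$, again a contradiction.

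The four factors are pairwise coprime primitive irreducibles, so Gauss's lemma gives divisibility by the full product.
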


In 1994 Montgomery~\cite[Chapter 10]{MR1297543} proved that $C_N$ converges to a limit $C$ as $N \to \infty$; and in 2005 Pritsker~\cite[Theorem 3.1]{MR2177184} showed that $C \in [0.86441, 0.85992]$.
Consequently, the Gelfond--Shnirelman--Nair method cannot lead to an inequality that is stronger than
\begin{equation}\label{equ:psi-0.85992N}
    \psi(N) > 0.85992 \cdot N
\end{equation}
for all large integers $N$.
Unfortunately, inequality \eqref{equ:psi-0.85992N} is quite far from the asymptotic lower bound of the Prime Number Theorem.

To deal with this problem, in 2013 D.~Bazzanella~\cite{MR3122290} (see also~\cite{MR3498948,MR3675407}) suggested to study the polynomials $P_N$ for which $|I(P_N)|$ is nonzero and minimal or, without loss of generality, for which $I(P_N)$ is positive and minimal.
Thus, in light of the inequality $\ell_n |I(P_N)| \geq 1$, he defined the set
\begin{equation*}
    S_N := \big\{P \in \mathbb{Z}[x] : \deg(P) < N, \, I(P) = 1 / \ell_N \big\}
\end{equation*}
and initiated the study of its properties.

In particular, D.~Bazzanella (see~\cite[Theorem~1]{MR3122290} and the comments after~\cite[Theorem~1.3]{MR3682482}) gave a result that implies the following theorem, which is somehow similar to Theorem~\ref{thm:aparicio}.

\begin{theorem}\label{thm:bazzanella}
    For every positive integer $N$, there exist infinitely many polynomials \mbox{$P \in S_N$} such that $x^{\lfloor N / 2 \rfloor}$ divides $P(x)$ in $\mathbb{Z}[x]$.
    Moreover, the exponent $x^{\lfloor N / 2 \rfloor}$ cannot be improved, that is, there exist infinitely many positive integers $N$ such that $x^{\lfloor N / 2 \rfloor + 1}$ divides no polynomial in $S_N$.
    The same results hold if $x^{\lfloor N / 2 \rfloor}$ is replaced by $(1 - x)^{\lfloor N / 2 \rfloor}$.
\end{theorem}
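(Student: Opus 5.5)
The plan is to reduce everything to a statement about which rationals occur as $\int_0^1 x^k Q(x)\,\mathrm{d}x$ with $Q \in \mathbb{Z}[x]$. Fix $N$ and put $k := \lfloor N/2 \rfloor$. Any polynomial of degree less than $N$ divisible by $x^k$ has the form $P(x) = x^k \sum_{j=0}^{N-k-1} q_j x^j$ with $q_j \in \mathbb{Z}$. Then
\begin{equation*}
    I(P) = \sum_{d \,=\, k+1}^{N} \frac{q_{d-k-1}}{d} .
\end{equation*}
So the attainable values of $I(P)$ form exactly the additive subgroup $G$ of $\mathbb{Q}$ generated by $1/(k+1), \dots, 1/N$.

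\textbf{Step 1: $G = \frac{1}{L}\mathbb{Z}$, where $L := \lcm\{k+1,\dots,N\}$.} Clearly $G \subseteq \frac1L\mathbb{Z}$. Conversely, for each prime power $p^a \,\|\, L$ choose $d \in [k+1,N]$ with $p^a \mid d$. Then $(d/p^a)\cdot(1/d) = 1/p^a \in G$. Since the numbers $L/p^a$ have gcd $1$, a B\'ezout combination of the elements $1/p^a$ gives $1/L \in G$.

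\textbf{Step 2: $L = \ell_N$.} For $m \le k$, the interval $[k+1, 2k] \subseteq [k+1, N]$ consists of $k \ge m$ consecutive integers, so it contains a multiple of $m$. Hence every $m \le N$ divides $L$. Combining Steps 1 and 2, there is some $P = x^k Q \in S_N$.

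\textbf{Step 3: infinitely many.} Add $t \cdot x^k\big((k+2) - (k+1)x\big)$ for $t \in \mathbb{Z}$. This polynomial has integral $0$ and degree $k+1 < N$ as soon as $N - k \ge 2$, i.e.\ $N \ge 3$. For $N \le 2$ the set $S_N$ is a singleton (for $N = 1$ it is $\{1\}$, for $N = 2$ it is $\{x\}$), so there the statement must be read as concerning $N \ge 3$. I would note this edge case explicitly.

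\textbf{Step 4: optimality.} Take $N = 2p - 1$ with $p$ an odd prime, so $k = p - 1$. If $x^{k+1} = x^p$ divides $P$, then by the computation above $I(P)$ lies in the group generated by $1/(p+1), \dots, 1/(2p-1)$. None of these denominators is divisible by $p$, because the next multiple of $p$ after $p$ itself is $2p > N$. Hence the reduced denominator of $I(P)$ is coprime to $p$. On the other hand $p \mid \ell_N$, so $I(P) \neq 1/\ell_N$. Since there are infinitely many primes, this gives infinitely many such $N$.

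\textbf{Step 5: the factor $(1-x)^k$.} This follows from the substitution $x \mapsto 1 - x$, which preserves degree, integrality of coefficients and the integral over $[0,1]$.

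The only genuinely substantive points are Step 2, the covering argument showing that $(k, N]$ already sees every prime power up to $N$, and the matching counterexample in Step 4. Both are short, so I expect the main care to be needed in Step 1 (the B\'ezout/CRT bookkeeping) and in handling the small-$N$ edge cases.
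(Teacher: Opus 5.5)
Your approach is correct, but Step~3 contains a computational slip. The polynomial $x^k\big((k+2)-(k+1)x\big)$ does not have zero integral, since
\[
\int_0^1 x^k\big((k+2)-(k+1)x\big)\,\mathrm{d}x=\frac{k+2}{k+1}-\frac{k+1}{k+2}=\frac{2k+3}{(k+1)(k+2)}\neq 0 .
\]
The coefficients must be swapped: $x^k\big((k+1)-(k+2)x\big)$ has integral $1-1=0$. With this fix, and $N\ge 3$ so that its degree $k+1$ is less than $N$, Step~3 works.

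In your edge-case remark, $S_2$ is not $\{x\}$. It is infinite: $c_0+c_1x\in S_2$ if and only if $2c_0+c_1=1$, so for example $1-x\in S_2$. What is true, and what you need, is that $x$ is the only element of $S_2$ divisible by $x$. Your caveat that the ``infinitely many'' clause genuinely fails for $N\le 2$ is right. The same degeneracy sits in the paper's Lemma~\ref{lem:bezout}, whose last sentence is false for $m=1$.

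The paper itself does not prove this statement; it quotes it from D.~Bazzanella's work. The route the paper's own machinery suggests is Lemma~\ref{lem:criterion} with $a=\lfloor N/2\rfloor$ and $b=0$. That works in the basis $x^a(1-x)^n$, with integrals $1/\big((a+n+1)\binom{a+n}{a}\big)$, and controls valuations via Kummer's theorem. In this special case, condition (S3) can be met using only the factor $a+n_p+1$, chosen as a multiple of $p^v$ in $(\lfloor N/2\rfloor,N]$. That is exactly your Step~2 covering fact, so the two routes agree in substance.

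Your monomial basis $x^{k+j}$ is available only because no power of $1-x$ is imposed. It turns the integrals into plain reciprocals $1/d$, so existence (Steps~1--2) and optimality (Step~4) become one-line valuation checks with no binomial coefficients or carries. The paper's binomial basis becomes necessary once $x$ and $1-x$ are imposed simultaneously, as in Theorem~\ref{thm:main}.
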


Later, in 2017, Sanna~\cite{MR3682482} proved the following theorem.

\begin{theorem}\label{thm:sanna}
    For every positive integer $N$, there exist infinitely many polynomials \mbox{$P \in S_N$} such that
    \begin{equation*}
        \big(x(1 - x)\big)^{\lfloor N / 3 \rfloor}
    \end{equation*}
    divides $P(x)$ in $\mathbb{Z}[x]$.
    Moreover, the exponent $\lfloor N / 3 \rfloor$ cannot be improved, that is, there exist infinitely many positive integers $N$ such that
    \begin{equation*}
        \big(x(1 - x)\big)^{\lfloor N / 3 \rfloor + 1}
    \end{equation*}
    divides no polynomial in $S_N$.
\end{theorem}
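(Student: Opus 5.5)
Write $P(x) = \big(x(1-x)\big)^k Q(x)$ with $k := \lfloor N/3 \rfloor$ and $\deg Q < N - 2k$, and write $Q(x) = \sum_{j} q_j x^j$. The plan is to reduce the existence claim to a $p$-adic valuation statement about Beta integrals. By the Beta integral,
\begin{equation*}
    I(P) = \sum_{j \,=\, 0}^{N-2k-1} q_j \int_0^1 x^{k+j}(1-x)^k \,\mathrm{d}x = \sum_{j} \frac{q_j}{n_j \binom{n_j - 1}{k}}, \qquad n_j := 2k + j + 1 ,
\end{equation*}
so $n$ runs over the integers in $[2k+1, N]$. Each summand lies in $\frac{1}{\ell_N}\mathbb{Z}$, since it is the integral of an integer polynomial of degree $<N$. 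Hence the set of attainable values $I(P)$ is a subgroup of $\frac1{\ell_N}\mathbb{Z}$. It equals $\frac1{\ell_N}\mathbb{Z}$ if and only if, for every prime $p \le N$ with $p^a \le N < p^{a+1}$, some $n \in [2k+1,N]$ satisfies $v_p\big(n\binom{n-1}{k}\big) = a$. If that holds, then some integer combination gives $I(P)=1/\ell_N$, i.e.\ $P\in S_N$. To get infinitely many such $P$, I will add to $Q$ any element of the nonzero kernel of the map $(q_j)\mapsto \sum_j q_j/(n_j\binom{n_j-1}{k})$. This kernel has positive rank as soon as $N-2k\ge 2$, which holds whenever $N \geq 2$ and $N \neq 3$. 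The finitely many tiny values of $N$ are checked directly.

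For the key step, fix $p$ and $a$, and choose $n := m p^a$ with $m := \lfloor N/p^a \rfloor \in \{1,\dots,p-1\}$. Then $v_p(n) = a$, and by Kummer's theorem $p \nmid \binom{n-1}{k}$ exactly when adding $k$ and $n-1-k$ in base $p$ has no carries. The base-$p$ digits of $n-1$ are $m-1$ followed by $a$ digits equal to $p-1$. So the only condition is that the leading digit $d = \lfloor k/p^a\rfloor$ of $k$ satisfies $d \le m-1$. This holds because $k \le N/3$ gives $d \le \lfloor m/3 \rfloor \le m - 1$; in particular $m=1$ forces $d=0$.

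It remains to check $n \ge 2k+1$. If $p^a \le N/3$ then $n > N - p^a \ge 2N/3 \ge 2k$. If $p^a > N/3$ and $m \ge 2$ then $n \ge 2p^a > 2N/3$. If $m=1$ then $p^a > N/2$, so $n = p^a > 2k$. This proves the first part of the statement.

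For optimality, I take $k' = \lfloor N/3\rfloor + 1$ and look for $N$ and a prime $p$ such that no $n \in [2k'+1,N]$ reaches valuation $a$; then no $P$ divisible by $\big(x(1-x)\big)^{k'}$ can have $I(P) = 1/\ell_N$. The natural candidates are $p=2$ and $N = 3\cdot 2^{a-1}$, so $2^a \le N < 2^{a+1}$. Then $k' = 2^{a-1}+1$, so $2k'+1 = 2^a + 3$, and no multiple of $2^a$ lies in $[2k'+1,N]$. The main obstacle is that $v_2\big(n\binom{n-1}{k'}\big)$ can exceed $v_2(n)$ because of carries, so ruling out valuation $a$ needs more than looking at multiples of $2^a$. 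I would first try the identity $n\binom{n-1}{k'} = (n-k')\binom{n}{k'}$ together with the standard bound $v_p\big(\binom{n}{i}\big) \le \lfloor \log_p n\rfloor - v_p(i)$ and a direct Kummer carry count for $n < 2^{a+1}$. If the bound falls short for this family, I will adjust it, for instance to $N = 3\cdot 2^{a-1}+1$ or to odd primes $p$ with $N$ just below $3p^{a-1}\cdot\frac{p+1}{2}$.
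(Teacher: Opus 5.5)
Your reduction to the $p$-adic criterion (Lemma~\ref{lem:criterion} with both exponents equal to $k$) and your kernel argument for infinitude are fine. However, each half of the statement has a genuine gap.

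\textbf{Existence.} The step ``if $m=1$ then $p^a>N/2$, so $n=p^a>2k$'' is false, because $2k=2\lfloor N/3\rfloor$ can be as large as $2N/3$. Suppose $N<2p^a\le 4\lfloor N/3\rfloor$; for $p=2$ this is every $N$ with $3\cdot2^{a-1}\le N<2^{a+1}$, half of each dyadic block. Then the only multiple of $p^a$ up to $N$ is $p^a\le 2k$. So \emph{no} $n\in[2k+1,N]$ has $\nu_p(n)=a$, and no choice of $m$ can rescue the approach.

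Example: $N=7$, $p=2$, $a=2$, $k=2$. Your $n=4$ lies outside $[5,7]$, and the criterion holds only via $n=6$. There $\nu_2(6)=1$, and the missing factor $2$ comes from a carry, $\nu_2\binom{5}{2}=1$.

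So in this regime you must manufacture carries. The recipe is the one in the proof of Theorem~\ref{thm:main}, with $(2M,3M)$ replaced by $(k,k)$. Write $k=\sum_i k_ip^i$. Let $i_1$ be minimal with $2k_i\ge p-1$ for all $i\in[i_1,a)$, and let $i_2\ge i_1$ be maximal with $2k_i=p-1$ for $i\in[i_1,i_2)$. Put $n-1:=2k+\sum_{i<i_1}(p-1-2k_i)p^i$. Then:
\begin{enumerate}
\item $p^{i_2}\mid n$;
\item adding $k$ and $n-1-k$ produces at least $a-i_2$ carries;
\item $n\le p^{i_2}+2k\le N$, unless $p^{i_2}>N-2k\ge k$, in which case $n=p^{i_2}\le N$.
\end{enumerate}
Incidentally, your Kummer check on $\binom{n-1}{k}$ was unnecessary: $\nu_p(n)=a$ alone already suffices.

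\textbf{Optimality.} This part is only a plan, and its main candidate fails. For $N=3\cdot2^{a-1}$ with $a\ge3$, take $n=N\in[2^a+3,N]$. Here $\nu_2(N)=a-1$, and $\binom{N-1}{k'}$ is even, because bit $a-1$ is set in $k'=2^{a-1}+1$ but not in $N-1=2^a+2^{a-1}-1$. Hence $\nu_2\big(n\binom{n-1}{k'}\big)=a$, so $p=2$ is never an obstruction; for instance $12\binom{11}{5}=8\cdot693$. The same $n$ also disposes of the variant $N=3\cdot2^{a-1}+1$.

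The obstruction should instead come from a prime equal to $k'$. Take $N=3p-1$ with $p\ge3$ prime, so that $k'=p$ and $p\le N<p^2$, i.e.\ $\nu_p(\ell_N)=1$. Then $n$ runs over $[2p+1,3p-1]$, which contains no multiple of $p$. By Lucas, $\binom{n-1}{p}\equiv\binom{2}{1}\binom{n-1-2p}{0}=2\not\equiv0\pmod p$. So the criterion fails at $p$, and no element of $S_N$ is divisible by $\big(x(1-x)\big)^{k'}$. Note that $N=12$ is indeed exceptional, but because of $p=5=k'$, not $p=2$.

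Finally, for $N\in\{1,3\}$ no ``direct check'' can succeed. $S_1=\{1\}$, and the only element of $S_3$ divisible by $x(1-x)$ is $x(1-x)$ itself. So the ``infinitely many'' clause is actually false there, as is the corresponding clause of Lemma~\ref{lem:bezout} for a single unknown. These $N$ have to be excluded, not verified.
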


Let $\{Q_N(x)\}$ be a sequence of ``explicit'' polynomials with the property that, for every positive integer $N$, there exists a polynomial $P \in S_N$ such that $Q_N(x)$ divides $P(x)$ in $\mathbb{Z}[x]$.
In~light of Theorems~\ref{thm:bazzanella} and~\ref{thm:sanna}, three examples of such sequences are $\big\{x^{\lfloor N / 2 \rfloor}\big\}$, $\big\{(1-x)^{\lfloor N / 2 \rfloor}\big\}$, and $\big\{\big(x(1-x)\big)^{\lfloor N / 3 \rfloor}\big\}$.
Sanna~\cite[Question]{MR3682482} asked how large the limit
\begin{equation*}
    \delta_Q := \liminf_{N \,\to\, \infty} \frac{\deg(Q_N)}{N}
\end{equation*}
can be; and if $\delta_Q$ can be arbitrarily close to $1$, or even equal to $1$.
Note that the sequences of Theorems~\ref{thm:bazzanella} and~\ref{thm:sanna} give $\delta_Q = 1/2$ and $\delta_Q = 2/3$, respectively.

The main result of this paper is the following.

\begin{theorem}\label{thm:main}
    For every positive integer $N$, there exist infinitely many polynomials \mbox{$P \in S_N$} such that
    \begin{equation*}
        \big(x^3(1 - x)^2\big)^{\lfloor N / 6 \rfloor}
    \end{equation*}
    divides $P(x)$ in $\mathbb{Z}[x]$.
\end{theorem}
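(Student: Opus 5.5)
We write $k := \lfloor N/6 \rfloor$ and look for $P$ of the form $P(x) = x^{3k}(1-x)^{2k}R(x)$ with $R \in \mathbb{Z}[x]$ and $\deg R < N - 5k$. For $N < 6$ the factor is $1$, and the claim reduces to $S_N$ being infinite, which already follows from Theorem~\ref{thm:bazzanella}. So assume $k \geq 1$. For $j = 0,\dots,N-5k-1$, the Beta integral gives
\begin{equation*}
I_j := I\big(x^{3k+j}(1-x)^{2k}\big) = \frac{(3k+j)!\,(2k)!}{(5k+j+1)!} = \frac{1}{n\binom{n-1}{2k}}, \qquad n := 5k+j+1 ,
\end{equation*}
where $n$ ranges over $[5k+1, N]$. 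Each $I_j$ lies in $\frac{1}{\ell_N}\mathbb{Z}$, so the $\mathbb{Z}$-span of the $I_j$ is $\frac{d}{\ell_N}\mathbb{Z}$ for some positive integer $d$. Our polynomials $P$ realize exactly the integrals in this span. Hence some $P \in S_N$ of this form exists if and only if $d = 1$. This holds if and only if, for every prime $p \leq N$ with $p^e \| \ell_N$ (so $p^e \leq N < p^{e+1}$), there is some $n \in [5k+1,N]$ with
\begin{equation*}
v_p(n) + v_p\!\left(\tbinom{n-1}{2k}\right) = e .
\end{equation*}
Infinitely many such $P$ then follow because $N - 5k \geq k+1 \geq 2$. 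Indeed, $I(x^{3k}(1-x)^{2k}(a x + b)) = 0$ for a nonzero integer pair $(a,b)$, and adding arbitrary integer multiples of this polynomial to one solution gives infinitely many.

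To verify the $p$-adic condition, we use Kummer's theorem: $v_p\binom{n-1}{2k}$ equals the number of carries when $2k$ and $n-1-2k$ are added in base $p$. We split into cases according to the size of $p^e$.

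\emph{Case $p^e \geq 5k+1$.} Take $n = p^e$. Then $n-1$ has all base-$p$ digits equal to $p-1$, so there are no carries, and the sum equals $e$.

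\emph{Case $p^e \leq N - 5k$.} The interval $[5k+1,N]$ contains a multiple $n = c\,p^e$ of $p^e$ with $c < p$, since $n \leq N < p^{e+1}$. Then $v_p(n) = e$. Moreover, the $e$ lowest digits of $n-1$ are all $p-1$, and $\lfloor 2k/p^e \rfloor \leq \lfloor (n-1)/p^e\rfloor$. So $2k$ is digitwise at most $n-1$, there are no carries, and again the sum equals $e$.

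\emph{Case $N - 5k < p^e \leq 5k$.} This is the main obstacle, and it is where the specific exponents $3$ and $2$ (and the modulus $6$) must be used. Here $p^e > N - 5k \geq k$ and $p^{e+1} > N \geq 6k$. Since $p^e \leq 5k$ and $p^e > k$, we get $p \leq 5$ whenever $e \geq 2$. So apart from finitely many prime-power shapes, $p^e = p$ is a prime in $(N-5k, 5k]$. In that situation, first try to pick $n \in [5k+1,N]$ with $p \nmid n$ such that adding $2k$ and $n-1-2k$ produces exactly one carry. Writing $2k = p + r$ or $2k = r$ according to whether $p \leq 2k$, the requirement becomes that the residue of $n-1$ modulo $p$ be smaller than the residue of $2k$ modulo $p$, while $\lfloor (n-1)/p\rfloor \geq \lfloor 2k/p\rfloor$. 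The interval $[5k+1,N]$ has length at least $k$, and $p \in (k, 5k]$. We check, by subcases according to which of the intervals $(k,2k]$, $(2k,\tfrac{5}{2}k]$, $(\tfrac{5}{2}k, 5k]$ contains $p$, that such an $n$ exists. We use $n = p\lfloor 5k/p \rfloor + $ small shifts, or $n = 5k+1$ itself. For instance, if $p \in (2k,5k]$, then $n-1-2k < p$ must fail appropriately, and the single carry comes from the units digit. Where this is tight, we also allow $v_p(n) = 1$ with zero carries, as in the previous case. The residual small cases, namely $p \in \{2,3,5\}$ with $e \geq 2$ and small $N$, are finitely many digit patterns and are checked directly, using the same carry-counting.
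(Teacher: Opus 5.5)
Your reduction to the $p$-adic criterion is correct; it is the paper's Lemma~\ref{lem:criterion}, with your $n$ and $\binom{n-1}{2k}$ playing the role of $5M+n_p+1$ and $\binom{5M+n_p}{2M}$. Your first two cases, $p^e\ge 5k+1$ and $p^e\le N-5k$, are also fine.

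\textbf{The gap.} The third case $N-5k<p^e\le 5k$ carries the whole content of the theorem, and you do not prove it.

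\emph{The reduction to $e=1$ is false.} You claim that $k<p^e\le 5k$ forces $p\le 5$ when $e\ge 2$. This fails already for $N=72$, $k=12$, $p=7$: here $e=2$ and $p^e=49\in(12,60]$. More generally, for every prime $p\ge 7$ and every $e\ge 2$, any $k$ with $p^e/5\le k<p^e$ and $N=6k$ puts $p^e$ in this case. For $p\in\{2,3,5\}$ the ``residual'' instances are not finitely many either.

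\emph{Your two tools are not enough once $e\ge 2$.} You use either $v_p(n)=e$ with no carries, or $p\nmid n$ with carries. In the example, no $n\in[61,72]$ is divisible by $49$, and $v_7\binom{60}{24}=1$, so $n=5k+1$ fails. One needs, for instance, $n=63$, which combines $v_7(63)=1$ with the single carry in $\binom{62}{24}$.

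\emph{The case $e=1$ is only announced.} It can be completed: $n=5k+1$ gives a units carry for $p\in(k,\tfrac54k]\cup(\tfrac32k,\tfrac53k]\cup(2k,\tfrac52k]\cup(3k,5k]$, and $n=4p,3p,2p\in(5k,6k]$ cover the remaining ranges. But you did not write this out.

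\textbf{The missing idea.} The paper uses a uniform construction that trades powers of $p$ in $n$ against carries, with no split by the size of $p^e$.
Write $2k=\sum a_ip^i$ and $3k=\sum b_ip^i$. Let $i_1$ be minimal with $a_i+b_i\ge p-1$ for all $i\in[i_1,e)$. Put $n-1=5k+n_p$ with $n_p=\sum_{i<i_1}(p-1-a_i-b_i)p^i$, so that $3k+n_p$ has digits $p-1-a_i$ below position $i_1$.
Let $i_2\ge i_1$ be the first position with $a_{i_2}+b_{i_2}\neq p-1$, or $i_2=e$. Then:
\begin{itemize}
\item the sum $2k+(3k+n_p)$ has all digits $p-1$ below $i_2$, so $p^{i_2}\mid n$;
\item every position in $[i_2,e)$ produces a carry, so $v_p(n)+v_p\binom{n-1}{2k}\ge i_2+(e-i_2)=e$.
\end{itemize}
What remains is $n\le N$. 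If $p^{i_2}\le k$, then $n\le k+2k+3k=6k$. If $p^{i_2}>k$, the digits of $2k$ and $3k$ from position $i_2$ upward are forced to be tiny, $n=(a_{i_2}+b_{i_2}+1)p^{i_2}$, and four short cases give $n\le 6k\le N$ (or $n\le p^e$). This is where the exponents $3,2$ and the modulus $6$ are actually used.

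\textbf{A smaller point.} The inequality $N-5k\ge k+1$ fails when $6\mid N$; you only have $N-5k\ge k$. This still leaves two free coefficients for $k\ge 2$. At $N=6$, however, the only admissible polynomial is $x^3(1-x)^2$ itself, whose integral is $1/60=1/\ell_6$. So ``infinitely many'' is genuinely false there, as it is for $N=1$, where $S_1=\{1\}$. The paper's argument has the same blind spot, through Lemma~\ref{lem:bezout} with a single unknown.
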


Note that the sequence of Theorem~\ref{thm:main} gives $\delta_Q = 5/6$.
The next result shows that Theorem~\ref{thm:main} yields the largest $\delta_Q$ among the sequences of polynomials of the form 
\begin{equation*}
    Q_N(x) = x^{a_N} (1 - x)^{b_N}
\end{equation*}
for some integers $a_N, b_N \geq 0$.

\begin{theorem}\label{thm:optimal}
    Let $\{a_N\}_{N \geq 1}$ and $\{b_N\}_{N \geq 1}$ be sequences of nonnegative integers with the property that, for every positive integer $N$, there exists a polynomial $P \in S_N$ such that $x^{a_N} (1 - x)^{b_N}$ divides $P(x)$ in $\mathbb{Z}[x]$.
    Then $\liminf_{N \,\to\, \infty} (a_N + b_N)/N \leq 5 / 6$.
\end{theorem}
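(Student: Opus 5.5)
The plan is to argue by contradiction using a $p$-adic obstruction. Fix $\varepsilon > 0$ and suppose that $a + b \geq (5/6 + \varepsilon) N$, where $a = a_N$ and $b = b_N$, for infinitely many $N$. Write $P(x) = x^a (1-x)^b R(x)$ with $R(x) = \sum_{k=0}^{d} r_k x^k \in \mathbb{Z}[x]$ and $d = N - 1 - a - b$. Then
\begin{equation*}
    I(P) = \sum_{k=0}^{d} r_k \int_0^1 x^{a+k}(1-x)^b \,\mathrm{d}x = \sum_{k=0}^{d} r_k \, \frac{(a+k)!\, b!}{(a+b+k+1)!} .
\end{equation*}
Since $P \in S_N$, we have $I(P) = 1/\ell_N$, so $v_p(I(P)) = -1$ for every prime $p \leq N$. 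It therefore suffices to find a prime $p \leq N$ such that
\begin{equation*}
    v_p\!\left(\frac{(a+b+k+1)!}{(a+k)!\, b!}\right) = 0 \quad \text{for all } k = 0, \dots, d .
\end{equation*}
Then every summand of $I(P)$ is $p$-integral, which contradicts $v_p(I(P)) = -1$.

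For a prime $p > \sqrt{N}$, Legendre's formula reduces this valuation to $\lfloor (m+b+1)/p \rfloor - \lfloor m/p \rfloor - \lfloor b/p \rfloor$ with $m = a + k$. This quantity vanishes exactly when $(m \bmod p) + (b \bmod p) + 1 < p$. The condition must hold for the whole window $m \in [a, a+d]$, and this window is short: its length is $d < (1/6 - \varepsilon)N$. So the task becomes purely combinatorial. We must exhibit, in terms of the ratios $\alpha = a/N$ and $\beta = b/N$, an interval $J$ of real numbers such that every prime $p \in J \cap [1, N]$ satisfies the no-carry condition. Here $J$ should have the form $(cN, c'N)$ with $c < c'$. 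By the Prime Number Theorem, such an interval contains a prime once $N$ is large.

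The first cases are the easy ones. If $\alpha > 1/2$ and $\beta < \alpha$, take $p \in (\max(b, N/2), a]$. Then $\lfloor m/p \rfloor = 1$, $\lfloor b/p \rfloor = 0$ and $\lfloor (m+b+1)/p \rfloor = 1$, because $m + b + 1 \leq N < 2p$. If instead $\beta > 1/2$, take $p \in (a+d+1, b]$, a range that is nonempty since $a + d + 1 = N - b < b$. There is no carry, because $m < p \leq b < N < 2p$ forces $m + b + 1 - p < p$, so again $\lfloor (m+b+1)/p \rfloor = \lfloor b/p \rfloor = 1$. In the remaining region both $\alpha$ and $\beta$ are at most about $1/2$. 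Combined with $\alpha + \beta > 5/6$, this gives $\alpha, \beta > 1/3$. Here I would use primes $p \in (N/3, N/2]$. Writing $b = p + b'$ and $m = p + m'$, the no-carry condition becomes $m' + b' + 1 < p$. I would split further according to whether $a$ and $b$ lie below or above $2p$, and choose $p$ just above $\max(a+d, b)/2$, or just below such a threshold, so that both residues stay small over the whole window.

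The main obstacle is this last, middle case. There one has to check that the union of admissible prime intervals covers every pair $(\alpha, \beta)$ with $\alpha + \beta \geq 5/6 + \varepsilon$, and that $5/6$ is exactly where this covering stops working. That is consistent with the extremal choice $\alpha = 1/2$, $\beta = 1/3$ from Theorem~\ref{thm:main}. I would first draw the region in the $(\alpha, \beta)$-plane. Then I would verify the inequalities piece by piece, using the no-carry criterion for $p$ in $(N/2, N]$, in $(N/3, N/2]$, and if necessary in $(N/4, N/3]$. Finally, for boundary configurations I would shrink $\varepsilon$, so that each required interval $(cN, c'N)$ has $c < c'$ strictly and hence contains a prime for all large $N$.
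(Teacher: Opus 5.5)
Your framework is right. Asking for one prime $p\in(\sqrt N,N]$ with $\lfloor (m+b+1)/p\rfloor=\lfloor m/p\rfloor+\lfloor b/p\rfloor$ for all $m\in[a,a+d]$ is just the $\nu_p$-form of Lemma~\ref{lem:criterion}. (You only need $\nu_p(1/\ell_N)<0$; it equals $-1$ only for $p>\sqrt N$.) Your case $p\in(N/2,a]$ is also correct. But the proof is not complete, and two stated steps are wrong.

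First, in the case $\beta>1/2$ you allow every prime in $(a+d+1,b]=(N-b,b]$. The lower end of this range is below $N/2$, and there your justification ``$N<2p$'' fails. For example, take $N=100$, $a=30$, $b=55$, $p=47$: the top of the window, $m=44$, gives $\lfloor 100/47\rfloor=2\neq 0+1$, so $47$ is no obstruction. The range must be $(N/2,b]$.

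More seriously, the middle case, which you call the main obstacle, is left as a plan, and the plan points the wrong way. A prime just above $\max(a+d,b)/2\approx (N-b)/2$ lies below $N/3$. For such a $p$, the value $m=a+d$ gives $\lfloor (m+b+1)/p\rfloor=\lfloor N/p\rfloor\ge 3$ against $\lfloor m/p\rfloor+\lfloor b/p\rfloor=1+1$, which is a carry. There is also a uniformity problem. An interval $(cN,c'N)$ with $c<c'$ contains a prime for large $N$ only if $c'-c$ is bounded below uniformly. Your case boundaries $\alpha>1/2$ and $\beta>1/2$ do not ensure this.

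The missing observation is that no region-covering is needed: two primes suffice, exactly the paper's $q\approx N/2$ and $r\approx N/3$. Let $p_1,p_2$ be the least primes exceeding $N/2$ and $N/3$. For large $N$ you have $p_1<(\frac{1}{2}+\frac{\varepsilon}{2})N$ and $p_2<(\frac{1}{3}+\frac{\varepsilon}{2})N$.
\begin{itemize}
\item If $\max(a,b)\ge p_1$, your first two cases with $p=p_1$ (the second one corrected as above) give the obstruction.
\item Otherwise $\min(a,b)>(\frac{5}{6}+\varepsilon)N-p_1>p_2$. Then for every $m\in[a,a+d]$ you have $p_2\le m,b<2p_2$ and $2p_2<m+b+1\le N<3p_2$. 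So both sides of the no-carry identity equal $2$, and $p_2$ obstructs.
\end{itemize}
Defining the cases through $p_1$ also removes the uniformity issue.

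Finally, your contradiction hypothesis only supplies infinitely many $N$. So you need an obstruction for every large $N$, not just for a chosen subsequence like the paper's $N=2(q-1)$. The argument above does this. Once completed, your route therefore proves the stronger statement $\limsup_{N\to\infty}(a_N+b_N)/N\le 5/6$.
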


It seems likely that considering more involved ``explicit'' sequences of polynomials $\{Q_N(x)\}$ makes it possible to achieve larger values of $\delta_Q$.
For instance, letting
\begin{equation}\label{equ:QN-2-3-19}
    Q_N(x) := x^{\lfloor N/2\rfloor}(1-x)^{\lfloor N/3\rfloor}(1-x-x^2)^{\lfloor N/19\rfloor} ,
\end{equation}
a computation shows that for every positive integer $N \leq 1000$ there exists a polynomial $P \in S_N$ such that $Q_N(x)$ divides $P(x)$ in $\mathbb{Z}[x]$ (see the code in the Appendix).
If the previous property indeed holds for every positive integer $N$, then
\begin{equation*}
    \delta_Q = \frac{1}{2} + \frac{1}{3} + \frac{2}{19} = 0.938... ,
\end{equation*}
which would improve the value of $\delta_Q$ given by the sequence of Theorem~\ref{thm:main}.

\section{Preliminaries}

This section collects some preliminary results. Except for Lemma ~\ref{lem:criterion}, these are all well-known results, and including/referencing their proofs is just for completeness.

The first lemma provides the evaluation of an integral.

\begin{lemma}\label{lem:binomial-integral}
	Let $a$ and $b$ be nonnegative integers.
	Then
	\begin{equation}\label{equ:binomial-integral}
		\int_0^1 x^a (1 - x)^b \mathrm{d}x = \frac1{(a + b + 1) \binom{a + b}{a}} .
	\end{equation}
\end{lemma}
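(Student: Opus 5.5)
We prove \eqref{equ:binomial-integral} by induction on $b$, using integration by parts to trade a factor $(1-x)$ for a factor $x$. This is the standard Beta-function identity $B(a+1,b+1) = a!\,b!/(a+b+1)!$. Note that the right-hand side of \eqref{equ:binomial-integral} equals $\frac{a!\,b!}{(a+b+1)!}$, since $(a+b+1)\binom{a+b}{a} = \frac{(a+b+1)!}{a!\,b!}$. So it suffices to show
\begin{equation*}
    J(a,b) := \int_0^1 x^a(1-x)^b \,\mathrm{d}x = \frac{a!\,b!}{(a+b+1)!}
\end{equation*}
for all nonnegative integers $a,b$.

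\textbf{Base case.} For $b = 0$ we have $J(a,0) = \int_0^1 x^a\,\mathrm{d}x = \frac{1}{a+1} = \frac{a!\,0!}{(a+1)!}$, valid for every $a \geq 0$.

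\textbf{Inductive step.} Let $b \geq 1$ and assume the formula holds for $b-1$ and every $a$. Integrate by parts with $u = (1-x)^b$ and $\mathrm{d}v = x^a\,\mathrm{d}x$, so $v = x^{a+1}/(a+1)$. The boundary term $\big[(1-x)^b x^{a+1}/(a+1)\big]_0^1$ vanishes, because $(1-x)^b = 0$ at $x=1$ (as $b \geq 1$) and $x^{a+1} = 0$ at $x = 0$. This gives the recursion
\begin{equation*}
    J(a,b) = \frac{b}{a+1}\, J(a+1, b-1).
\end{equation*}
Applying the inductive hypothesis to $J(a+1,b-1)$ yields
\begin{equation*}
    J(a,b) = \frac{b}{a+1}\cdot\frac{(a+1)!\,(b-1)!}{(a+b+1)!} = \frac{a!\,b!}{(a+b+1)!},
\end{equation*}
which completes the induction.

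No step here is a real obstacle. The only care needed is to run the induction on $b$ uniformly in $a$, since the recursion increases $a$ while decreasing $b$, and to check that the boundary term vanishes, which requires $b \geq 1$. An alternative route is to expand $(1-x)^b$ binomially and evaluate a finite alternating sum; that is less clean, so we use the integration-by-parts induction.
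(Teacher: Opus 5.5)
Your proof is correct and follows the paper's argument: the base case $J(a,0)=1/(a+1)$, the integration-by-parts recursion $J(a,b)=\frac{b}{a+1}\,J(a+1,b-1)$ for $b\geq 1$, and induction on $b$ uniformly in $a$. You also spell out details the paper leaves implicit, namely why the boundary term vanishes and the identity $(a+b+1)\binom{a+b}{a}=\frac{(a+b+1)!}{a!\,b!}$.
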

\begin{proof}
	Let $J(a,b)$ denote the integral on the left-hand side of~\eqref{equ:binomial-integral}.
	Then $J(a, 0) = 1/(a + 1)$.
	If $b \geq 1$ then integrating by parts yields that $J(a, b) = J(a + 1, b - 1) \cdot b / (a + 1)$.
	Thus the claim follows easily by induction.
\end{proof}

The next lemma states a useful divisibility property.

\begin{lemma}\label{lem:binomial-divisibility}
	Let $a,b,N$ be integers such that $a, b \geq 0$ and $a + b < N$.
	Then $(a + b + 1) \binom{a + b}{a}$ divides $\ell_N$.
\end{lemma}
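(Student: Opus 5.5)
The strategy is to check the divisibility one prime at a time. Set $n := a + b + 1 \le N$, so the quantity in question is $n\binom{n-1}{a}$, and it suffices to show that for every prime $p$ the $p$-adic valuation satisfies
\begin{equation*}
    \nu_p\!\left(n\binom{n-1}{a}\right) \le \nu_p(\ell_N) = \max\{k \ge 0 : p^k \le N\}.
\end{equation*}
A cleaner route is the identity
\begin{equation*}
    n\binom{n-1}{a} = (a+1)\binom{n}{a+1},
\end{equation*}
or equivalently $n\binom{n-1}{a} = \binom{n}{a+1}\,(a+1)$. This does not settle the matter directly, though, so I will bound the valuation itself.

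The valuation is controlled by Kummer's theorem. The exponent $\nu_p\binom{a+b}{a}$ equals the number of carries when $a$ and $b$ are added in base $p$. Every carry happens at some position $j$, which forces $p^{j+1} \le a+b$. Let $K$ be the largest integer with $p^K \le N$, and let $r := \nu_p(n)$, so that $p^r \mid n$ and $p^r \le n \le N$.

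The key step, and the only delicate one, is to show that $r$ plus the number of carries is at most $K$. Since $n = a+b+1$ is divisible by $p^r$, the sum $a+b$ ends in $r$ base-$p$ digits all equal to $p-1$. In any digit position where the sum digit is $p-1$, no carry can come out of that position. To see this, suppose the digits of $a$ and $b$ in position $j$ are $a_j$ and $b_j$, and the incoming carry is $c_j \in \{0,1\}$. Then $a_j + b_j + c_j \equiv p-1 \pmod p$, and since $a_j + b_j + c_j \le 2p-1$, this forces $a_j + b_j + c_j = p-1$, so the outgoing carry is $0$. Starting from position $0$, whose incoming carry is $0$, induction shows there are no carries in positions $0, \dots, r-1$. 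All carries therefore occur at positions $j$ with $r \le j \le K-1$; positions $j \ge K$ are excluded because $p^{j+1} \le a+b < N$ forces $j+1 \le K$. Hence there are at most $K - r$ carries, and
\begin{equation*}
    \nu_p\!\left(n\binom{a+b}{a}\right) \le r + (K - r) = K = \nu_p(\ell_N),
\end{equation*}
which gives the divisibility.

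An alternative that avoids Kummer's theorem: Lemma~\ref{lem:binomial-integral} shows that $1/\bigl((a+b+1)\binom{a+b}{a}\bigr)$ equals the integral of an integer polynomial of degree $a+b < N$. Such an integral has denominator dividing $\ell_N$. However, this only shows that $\ell_N$ times the reciprocal is an integer, and that is exactly the claim, since the reciprocal of an integer $M$ has denominator $M$. This gives a one-line proof, and I would present it as the main argument, keeping the carry count as a backup.
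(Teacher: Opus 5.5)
Your main argument is correct and is exactly the paper's proof: $x^a(1-x)^b$ is an integer polynomial of degree $a+b<N$, so by Lemma~\ref{lem:binomial-integral} the number $1/\bigl((a+b+1)\binom{a+b}{a}\bigr)$ has denominator dividing $\ell_N$. Your Kummer backup is also sound, except that $a_j+b_j+c_j\le 2p-1$ alone does not force $a_j+b_j+c_j=p-1$ (since $2p-1\equiv p-1 \pmod p$); you need the inductive hypothesis $c_j=0$, which gives $a_j+b_j\le 2p-2$, and your induction from position $0$ supplies exactly that.
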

\begin{proof}
	The claim follows from Lemma~\ref{lem:binomial-integral} and from the fact that, as explained in the introduction, if $P_N \in \mathbb{Z}[x]$ is a polynomial such that $\deg(P_N) < N$ then the denominator of $I(P_N)$ divides $\ell_N$.
\end{proof}

Recall the following elementary lemma on solutions to a linear Diophantine equation.

\begin{lemma}\label{lem:bezout}
	Let $c_1, \dots, c_m$ be integers.
	Then the equation $\sum_{n = 1}^m c_n y_n = 1$ has a solution in integers $y_1, \dots, y_m$ if and only if $\gcd\{c_1, \dots, c_m\} = 1$.
	Furthermore, if a solution exists then there exist infinitely many solutions.
\end{lemma}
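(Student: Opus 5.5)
The statement to prove is Lemma~\ref{lem:bezout}, the classical Bézout-type criterion for solvability of $\sum_{n=1}^m c_n y_n = 1$. The plan is to study the set $L := \{\sum_{n=1}^m c_n y_n : y_1,\dots,y_m \in \mathbb{Z}\}$ of all integer linear combinations of the $c_n$. We prove both directions of the equivalence and then produce infinitely many solutions from one.

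For the easy direction, suppose a solution exists and set $d := \gcd\{c_1,\dots,c_m\}$. Then $d$ divides every $c_n$, hence divides $\sum_{n} c_n y_n = 1$. This forces $d = 1$. Here we use the convention that $\gcd\{0,\dots,0\} = 0$, so that the all-zero case, where no solution exists, is handled consistently.

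For the converse, assume $\gcd\{c_1,\dots,c_m\} = 1$. Then not all $c_n$ vanish, so $L$ contains a positive integer. The set $L$ is closed under addition and under multiplication by integers, so it is a subgroup of $\mathbb{Z}$. Let $g$ be the least positive element of $L$. Take any $z \in L$ and divide, writing $z = qg + r$ with $0 \le r < g$. Then $r = z - qg \in L$, and minimality of $g$ gives $r = 0$, so $L = g\mathbb{Z}$. Each $c_n$ lies in $L$, so $g$ divides every $c_n$ and therefore divides their gcd, which is $1$. Hence $g = 1 \in L$, and this is exactly a solution of the equation. Alternatively, one could induct on $m$ using the two-variable Euclidean algorithm, but the subgroup argument is cleaner.

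For infinitude, fix a solution $(y_1,\dots,y_m)$.
\begin{itemize}
\item If $m \ge 2$ and some $c_i \neq 0$, choose an index $j \neq i$. For every integer $t$, replace $y_i$ by $y_i + t c_j$ and $y_j$ by $y_j - t c_i$. This leaves the sum unchanged and gives infinitely many distinct solutions, since $c_i \neq 0$ means the $j$-th coordinate changes with $t$.
\item If $m = 1$, the equation $c_1 y_1 = 1$ has exactly one solution, namely when $c_1 = \pm 1$. So the ``infinitely many'' clause genuinely needs $m \ge 2$.
\end{itemize}
The only real subtlety, then, is this degenerate case $m = 1$. When the lemma is applied later, it will be with many coefficients, for instance the values $(a+b+1)\binom{a+b}{a}$ from Lemma~\ref{lem:binomial-divisibility} with $a+b<N$. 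I expect $m \ge 2$ to hold in those applications, whenever $N \ge 2$ and at least two admissible pairs $(a,b)$ occur. Beyond this, no step is expected to be an obstacle.
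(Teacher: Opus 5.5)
Your proof is correct. The paper gives no argument here beyond a citation to the literature, so what you have done is supply the standard self-contained proof of the imported fact. The steps are: the subgroup $L$ with least positive element $g$; the division algorithm giving $L=g\mathbb{Z}$; $g\mid c_n$ for all $n$, which forces $g=1$; and the shift $(y_i,y_j)\mapsto(y_i+tc_j,\,y_j-tc_i)$ for infinitude. Nothing in it needs repair.

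Your caveat about $m=1$ is genuine, not pedantic. As literally stated, the ``furthermore'' clause is false for $m=1$, since $c_1y_1=1$ then has only the solution $y_1=c_1$ with $c_1=\pm 1$.

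Your guess about the later application is slightly off, however. In the proof of Lemma~\ref{lem:criterion} the pair $(a,b)$ is fixed, and the coefficients are $c_n=\ell_N/\bigl((a+b+n+1)\binom{a+b+n}{a}\bigr)$ for $0\le n\le N-a-b-1$. So the number of unknowns is $N-a-b$, and $m\ge 2$ holds exactly when $a+b\le N-2$.

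When $a+b=N-1$ the degenerate case does occur, and the implication (S1)$\Rightarrow$(S2) fails. For $N=6$, $a=3$, $b=2$ one has $\ell_6=60=6\binom{5}{3}$. Hence $x^3(1-x)^2$ is the unique element of $S_6$ divisible by $x^3(1-x)^2$, and the ``infinitely many'' of Theorem~\ref{thm:main} fails at $N=6$. It also fails at $N=1$, where $S_1=\{1\}$. Your restriction to $m\ge 2$ is precisely the hypothesis those statements need.
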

\begin{proof}
	See, e.g., \cite[Theorem~1.1]{MR3970983}.
\end{proof}

The following lemma is fundamental for the proof of Theorem~\ref{thm:main}.
Hereafter, let $\nu_p$ denote the $p$-adic valuation.

\begin{lemma}\label{lem:criterion}
    Let $a,b,N$ be integers such that $a, b \geq 0$ and $a + b < N$.
    Then the following statements are equivalent.
    \begin{enumerate}[label=(S\arabic*)]
        \item\label{lem:criterion:ite:0} There exists a polynomial $P \in S_N$ such that $x^a (1 - x)^b$ divides $P(x)$ in $\mathbb{Z}[x]$.
        \item\label{lem:criterion:ite:1} There exist infinitely many polynomials $P \in S_N$ such that $x^a (1 - x)^b$ divides $P(x)$ in $\mathbb{Z}[x]$.
        \item\label{lem:criterion:ite:2} For every prime number $p$ not exceeding $N$, there exists an integer $n_p$ such that $0 \leq n_p < N - a - b$ and 
        \begin{equation*}
        	\nu_p\!\left((a + b + n_p + 1) \binom{a + b + n_p}{a}\right) \geq \left\lfloor \frac{\log N}{\log p} \right\rfloor .
        \end{equation*}
    \end{enumerate}
\end{lemma}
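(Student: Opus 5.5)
Our plan is to reduce everything to a linear Diophantine problem via Lemmas~\ref{lem:binomial-integral} and~\ref{lem:bezout}. Any $P \in \mathbb{Z}[x]$ with $\deg(P) < N$ divisible by $x^a(1-x)^b$ can be written as $P(x) = x^a(1-x)^b R(x)$ with $\deg(R) < N - a - b$. Since $(1-x)^b x^n$ and $x^n(1-x)^{b}$ for $n=0,\dots,N-a-b-1$ span the same lattice, we write $R(x) = \sum_{n=0}^{N-a-b-1} y_n x^n$ with $y_n \in \mathbb{Z}$ arbitrary. By Lemma~\ref{lem:binomial-integral},
\begin{equation*}
I(P) = \sum_{n=0}^{N-a-b-1} \frac{y_n}{d_n}, \qquad d_n := (a+b+n+1)\binom{a+b+n}{a+n}.
\end{equation*}
Lemma~\ref{lem:binomial-divisibility} gives $d_n \mid \ell_N$. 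Setting $c_n := \ell_N/d_n \in \mathbb{Z}$, the condition $P \in S_N$ becomes $\sum_n c_n y_n = 1$.

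By Lemma~\ref{lem:bezout}, \ref{lem:criterion:ite:0} holds if and only if $\gcd\{c_n\} = 1$, and in that case there are infinitely many solutions $(y_n)$. Distinct solutions give distinct $R$, hence distinct $P$, so \ref{lem:criterion:ite:0} and \ref{lem:criterion:ite:1} are equivalent. It remains to show that $\gcd\{c_n\}=1$ is equivalent to \ref{lem:criterion:ite:2}. The gcd equals $1$ iff for every prime $p$ some $c_n$ is coprime to $p$. For $p > N$ this is automatic, since $p \nmid \ell_N$. For $p \le N$ we have $\nu_p(\ell_N) = \lfloor \log N/\log p\rfloor$, so $p \nmid c_n$ iff $\nu_p(d_n) \ge \lfloor \log N/\log p\rfloor$. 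The reverse inequality always holds because $d_n \mid \ell_N$.

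One point needs care. The binomial in \ref{lem:criterion:ite:2} is $\binom{a+b+n_p}{a}$, while the integral naturally yields $\binom{a+b+n}{a+n}$, which equals $\binom{a+b+n}{b}$. To match the statement, we instead expand $R$ in the basis $(1-x)^n$. Then $P$ is a $\mathbb{Z}$-combination of $x^a(1-x)^{b+n}$, whose integrals are $1/\bigl((a+b+n+1)\binom{a+b+n}{a}\bigr)$. The family $(1-x)^n$, $0 \le n < N-a-b$, is also a $\mathbb{Z}$-basis of polynomials of degree $<N-a-b$, because the change of basis is unitriangular up to sign. With this basis the argument goes through verbatim and gives exactly the quantity in \ref{lem:criterion:ite:2}.

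The main obstacle is just this bookkeeping: choosing the integral basis so that the resulting denominators match the stated binomial, and checking that the change of basis is unimodular so that no solutions are lost. The rest is direct from Lemmas~\ref{lem:binomial-integral}, \ref{lem:binomial-divisibility} and~\ref{lem:bezout}.
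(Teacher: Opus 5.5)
Your proof is correct and, once you switch to the basis $(1-x)^n$, it is essentially the paper's argument. You write $P(x) = x^a(1-x)^b\sum_{n=0}^{N-a-b-1} y_n(1-x)^n$, use Lemmas~\ref{lem:binomial-integral} and~\ref{lem:binomial-divisibility} to turn $I(P)=1/\ell_N$ into $\sum_n c_n y_n = 1$ with integer $c_n$, and then apply Lemma~\ref{lem:bezout} with a prime-by-prime analysis of $\gcd\{c_n\}$. Your explicit checks that the change of basis is unimodular, and that distinct solutions $(y_n)$ give distinct polynomials $P$, fill in small points the paper leaves implicit.
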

\begin{proof}
	For the sake of notation, put $R := N - a - b - 1$.
    Let $P \in \mathbb{Z}[x]$ be a polynomial such that $\deg(P) < N$ and $x^a (1 - x)^b$ divides $P(x)$ in $\mathbb{Z}[x]$.
    This is equivalent to the equality
    \begin{equation*}
        P(x) = x^a (1 - x)^b \sum_{n \,=\, 0}^R y_n (1 - x)^n ,
    \end{equation*}
    for some integers $y_0, \dots, y_R$.
    From Lemma~\ref{lem:binomial-integral} it follows that
    \begin{equation*}
        I(P_N) := \int_0^1 P(x) \mathrm{d}x
        = \sum_{n \,=\, 0}^R y_n \int_0^1  x^a (1 - x)^{b + n} \mathrm{d}x
        = \sum_{n \,=\, 0}^R \frac{y_n}{(a + b + n + 1)\binom{a + b + n}{a}} .
    \end{equation*}
    Thus, setting
    \begin{equation*}
    	c_n := \frac{\ell_N}{(a + b + n + 1)\binom{a + b + n}{a}} 
    \end{equation*}
    for each $n \in\{0,\dots,R\}$, it turns out that $P \in S_N$ if and only if
    \begin{equation}\label{lem:criterion:equ:1}
        \sum_{n \,=\, 0}^R c_n y_n = 1 .
    \end{equation}
    By Lemma~\ref{lem:binomial-divisibility}, the coefficients $c_0, \dots, c_R$ are integers.
    Hence, by Lemma~\ref{lem:bezout}, a necessary and sufficient condition for the existence of integers $y_0, \dots, y_R$ satisfying~\eqref{lem:criterion:equ:1} is that
    \begin{equation}\label{lem:criterion:equ:2}
        \gcd\{c_0, \dots, c_R\} = 1 .
    \end{equation}
    In fact, if~\eqref{lem:criterion:equ:2} holds then by Lemma~\ref{lem:bezout} there are infinitely many integer tuples $(y_0, \dots, y_R)$ that satisfy \eqref{lem:criterion:equ:1}.
    Thus \ref{lem:criterion:ite:0} and \ref{lem:criterion:ite:1} are both equivalent to \eqref{lem:criterion:equ:2}.
    Note that each of the integers $c_0, \dots, c_R$ is a divisor of $\ell_N$ and, consequently, its prime factors do no exceed $N$.
    Hence \eqref{lem:criterion:equ:2} is equivalent to the claim that for every prime number $p$ that does not exceed $N$ there exists and integer $n_p$ such that $0 \leq n_p \leq R$ and
    \begin{equation*}
        \nu_p\!\left((a + b + n_p + 1)\binom{a + b + n_p}{a}\right)
            \geq \nu_p(\ell_N)
            = \left\lfloor \frac{\log N}{\log p} \right\rfloor ,
    \end{equation*}
    which is statement \ref{lem:criterion:ite:2}.
    Therefore, the equivalence of \ref{lem:criterion:ite:0}, \ref{lem:criterion:ite:1}, and \ref{lem:criterion:ite:2} follows.
\end{proof}

Finally, recall the following theorem of Kummer~\cite{MR1578793}, which concerns the $p$-adic valuation of a binomial coefficient.

\begin{theorem}\label{thm:kummer}
	Let $p$ be a prime number and let $a,b$ be nonnegative integers.
	Then the $p$-adic valuation of $\binom{a + b}{a}$ is equal to the number of carries that occur when performing the addition of $a$ and $b$ in base $p$.
\end{theorem}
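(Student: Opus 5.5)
We prove Kummer's theorem (Theorem~\ref{thm:kummer}) by combining Legendre's formula for $\nu_p(n!)$ with a digit-sum identity for addition in base $p$. For a nonnegative integer $n$ with base-$p$ expansion $n = \sum_{i \geq 0} n_i p^i$ ($0 \leq n_i < p$), let $s_p(n) := \sum_i n_i$ be its base-$p$ digit sum.

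The first step is Legendre's formula, $\nu_p(n!) = \sum_{k \geq 1} \lfloor n / p^k \rfloor$. It holds because each integer $m \leq n$ contributes $\nu_p(m)$ to $\nu_p(n!)$, and $\nu_p(m)$ is exactly the number of $k \geq 1$ with $p^k \mid m$; exchanging the order of summation gives the formula. Next we rewrite it in terms of digits. Since $\lfloor n/p^k \rfloor = \sum_{i \geq k} n_i p^{i-k}$, summing over $k \geq 1$ gives
\begin{equation*}
	\nu_p(n!) = \sum_{i \geq 0} n_i \frac{p^i - 1}{p - 1} = \frac{n - s_p(n)}{p - 1} .
\end{equation*}
Applying this to $\binom{a+b}{a} = (a+b)!/(a!\,b!)$, the terms $a + b$, $a$, $b$ cancel, and we obtain
\begin{equation*}
	\nu_p\!\left(\binom{a+b}{a}\right) = \frac{s_p(a) + s_p(b) - s_p(a+b)}{p - 1} .
\end{equation*}

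It remains to show that $s_p(a) + s_p(b) - s_p(a+b) = (p-1)\,C$, where $C$ is the number of carries. Write $a = \sum a_i p^i$, $b = \sum b_i p^i$, $a + b = \sum s_i p^i$. Let $c_i \in \{0,1\}$ be the carry out of position $i$, with $c_{-1} := 0$. The schoolbook addition algorithm is characterized by
\begin{equation*}
	a_i + b_i + c_{i-1} = s_i + p\,c_i \qquad (i \geq 0),
\end{equation*}
with $0 \leq s_i < p$; by uniqueness of base-$p$ expansions, these $s_i$ are indeed the digits of $a+b$. All but finitely many terms vanish, so we may sum over all $i$. This gives $s_p(a) + s_p(b) + C = s_p(a+b) + pC$, where $C = \sum_i c_i$. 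Hence $s_p(a) + s_p(b) - s_p(a+b) = (p-1)C$, and dividing by $p-1$ yields the claim.

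The only point requiring a little care is justifying that the carries defined by the recursion give the actual base-$p$ digits of $a+b$. One must check that $a_i + b_i + c_{i-1} \leq 2p - 1$, so that each $c_i \in \{0,1\}$, and then invoke uniqueness of base-$p$ representation. Everything else is routine telescoping.
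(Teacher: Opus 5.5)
Your proof is correct and complete: Legendre's formula in the digit-sum form $\nu_p(n!) = (n - s_p(n))/(p-1)$, combined with summing the carry relations $a_i + b_i + c_{i-1} = s_i + p\,c_i$, gives $(p-1)\,\nu_p\binom{a+b}{a} = (p-1)C$, and your check that $c_i \in \{0,1\}$ (by induction, since $a_i + b_i + c_{i-1} \leq 2p-1$) is what identifies $\sum_i c_i$ with the number of carries. The paper gives no argument of its own and only cites the literature for this classical result, so your write-up is the standard self-contained proof that the citation stands in for.
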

\begin{proof}
	See, e.g., \cite[Proposition 2.2]{MR4199098}.
\end{proof}

\section{Proof of Theorem~\ref{thm:main}}

Let $N$ be a positive integer and let $p$ be a prime number not exceeding $N$.
For the sake of brevity, put $M := \lfloor N / 6 \rfloor$ and $v := \lfloor \log N /\! \log p \rfloor$.
Since $2M \leq 3M < N < p^{v + 1}$, there exist (unique) integers $a_0,\dots,a_v \in {[0,p)}$ and $b_0, \dots, b_v \in {[0,p)}$ such that
\begin{equation}\label{equ:2M-3M-expansions}
	2M = \sum_{i \,=\, 0}^v a_i p^i \quad\text{ and }\quad
	3M = \sum_{i \,=\, 0}^v b_i p^i .
\end{equation}
Let $i_1$ be the minimal integer such that $0 \leq i_1 \leq v$ and
\begin{equation}\label{equ:i1-condition}
	a_i + b_i \geq p - 1 \text{ for every }  i \in {[i_1, v)} \cap \mathbb{Z} .
\end{equation}
(Note that $i_1$ exists because condition~\eqref{equ:i1-condition} is vacuously true for $i_1 = v$.)
Define
\begin{equation}\label{equ:np-definition}
	n_p := \sum_{i \,=\, 0}^{i_1 - 1} (p - 1 - a_i - b_i) p^i .
\end{equation}
In light of Lemma~\ref{lem:criterion}, it suffices to prove that
\begin{enumerate}[label=(C\arabic*),labelindent=1em,itemsep=0.5em,topsep=0.5em]
	\item\label{ite:cond1} $n_p \geq 0$;
	\item\label{ite:cond2} $p^v$ divides $(5M + n_p + 1) \binom{5M + n_p}{2M}$;
	\item\label{ite:cond3} $5M + n_p + 1 \leq N$.
\end{enumerate}

\begin{proof}[Proof of \ref{ite:cond1}]
	If $i_1 = 0$ then $n_p = 0$ by~\eqref{equ:np-definition}, and so \ref{ite:cond1} is true.
	Suppose that $i_1 > 0$.
	The minimality of $i_1$ with regard to \eqref{equ:i1-condition} implies that 
	\begin{equation}\label{equ:ai1-1bi1-1}
		a_{i_1 - 1} + b_{i_1 - 1} < p - 1
	\end{equation}
	From \eqref{equ:np-definition} and \eqref{equ:ai1-1bi1-1} it follows that
	\begin{equation*}
		n_p \geq p^{i_1-1} + \sum_{i \,=\, 0}^{i_1 - 2} (p - 1 - a_i - b_i) p^i
			\geq p^{i_1 - 1} - \sum_{i \,=\, 0}^{i_1 - 2} (p - 1) p^i
			= p^{i_1 - 1} - (p^{i_1 - 1} - 1) 
			= 1 ,
	\end{equation*}
	which implies \ref{ite:cond1}.
\end{proof}

\begin{proof}[Proof of \ref{ite:cond2}]
	Let $i_2$ be the maximal integer such that $i_1 \leq i_2 \leq v$ and
	\begin{equation}\label{equ:i2-condition}
		a_i + b_i = p - 1 \quad\text{ for every }  i \in {[i_1, i_2)} \cap \mathbb{Z} .
	\end{equation}
	(Note that $i_2$ exists because condition~\eqref{equ:i2-condition} is vacuously true for $i_2 = i_1$.)
	From \eqref{equ:2M-3M-expansions} and \eqref{equ:np-definition} it follows that
	\begin{equation}\label{equ:2M-three-parts}
		2M = \sum_{i \,=\, 0}^{i_1 - 1} a_i p^i + \sum_{i \,=\, i_1}^{i_2 - 1} a_i p^i + \sum_{i \,=\, i_2}^v a_i p^i
	\end{equation}
	and
	\begin{equation}\label{equ:3M-three-parts}
		3M + n_p = \sum_{i \,=\, 0}^{i_1 - 1} (p - 1 - a_i) p^i + \sum_{i \,=\, i_1}^{i_2 - 1} b_i p^i + \sum_{i \,=\, i_2}^v b_i p^i .
	\end{equation}
	Furthermore, inequality \eqref{equ:i1-condition} and the maximality of $i_2$ with regard to \eqref{equ:i2-condition} imply that
	\begin{equation}\label{equ:ai2bi2}
		i_2 = v, \text{ or } i_2 < v \text{ and } a_{i_2} + b_{i_2} \geq p .
	\end{equation}
	From \eqref{equ:i1-condition}, \eqref{equ:i2-condition}, \eqref{equ:2M-three-parts}, \eqref{equ:3M-three-parts}, and \eqref{equ:ai2bi2} it follows that at least $v - i_2$ carries occur when adding $2M$ and $3M + n_p$ in base $p$.
	Thus Theorem~\ref{thm:kummer} implies that
	\begin{equation}\label{equ:p-adic-second-factor}
		\nu_p\!\left( \binom{5M + n_p}{2M} \right) \geq v - i_2 .
	\end{equation}
	From \eqref{equ:i2-condition}, \eqref{equ:2M-three-parts}, and \eqref{equ:3M-three-parts} it follows that
	\begin{equation}\label{equ:5Mnp1-pi2}
		5M + n_p + 1 = 1 + \sum_{i \,=\, 0}^{i_2 - 1} (p - 1) p^i + \sum_{i \,=\, i_2}^v (a_i + b_i) p^i
			= p^{i_2} + \sum_{i \,=\, i_2}^v (a_i + b_i) p^i ,
	\end{equation}
	and consequently
	\begin{equation}\label{equ:p-adic-first-factor}
		\nu_p(5M + n_p + 1) \geq i_2 .
	\end{equation}
	Combining \eqref{equ:p-adic-second-factor} and \eqref{equ:p-adic-first-factor} yields \ref{ite:cond2}.
\end{proof}

\begin{proof}[Proof of \ref{ite:cond3}]
	If $p^{i_2} \leq M$ then \eqref{equ:5Mnp1-pi2} and \eqref{equ:2M-3M-expansions} imply that
	\begin{equation*}
		5M + n_p + 1 \leq M + \sum_{i \,=\, i_2}^v (a_i + b_i) p^i
			\leq M + 2M + 3M 
			= 6M \leq N ,
	\end{equation*}
	which is \ref{ite:cond3}.
	Suppose that $p^{i_2} > M$.
	Then \eqref{equ:2M-3M-expansions} implies that
	\begin{equation}\label{equ:other-2M-3M-bounds}
		2p^{i_2} > 2M \geq \sum_{i \,=\, i_2}^v a_i p^i \quad\text{ and }\quad
		3p^{i_2} > 3M \geq \sum_{i \,=\, i_2}^v b_i p^i .
	\end{equation}
	In turn, from \eqref{equ:other-2M-3M-bounds} it follows that 
	\begin{align}
		&a_{i_2} \leq 1 \text{ and } b_{i_2} \leq 2 ,\label{equ:bounds-ai2-bi2} \\
		&a_i = 0 \text{ for every } i \in [i_2+1, v] \cap \mathbb{Z} ,\label{equ:bounds-ai2+} \\
		&b_j = 0 \text{ for every } j \in [i_2+2, v] \cap \mathbb{Z} . \label{equ:bounds-bi2+}
	\end{align}
    For the sake of contradiction, suppose that $i_2 < v$ and $b_{i_2 + 1} \geq 1$.
    Then \eqref{equ:other-2M-3M-bounds} implies that 
    \begin{equation*}
        3p^{i_2} > b_{i_2} p^{i_2} + p^{i_2 + 1} ,
    \end{equation*}
    which in turn gives that $p = 2$ and $b_{i_2} = 0$.
    But, since $i_2 < v$, from \eqref{equ:ai2bi2} and \eqref{equ:bounds-ai2-bi2} it follows that
    \begin{equation*}
        2 = p \leq a_{i_2} + b_{i_2} \leq 1 + 0 = 1,
    \end{equation*}
    which is impossible.
    Thus, if $i_2 < v$ then $b_{i_2 + 1} = 0$.
	Therefore, regardless of whether $i_2 < v$ or not, from \eqref{equ:5Mnp1-pi2}, \eqref{equ:bounds-ai2+}, and \eqref{equ:bounds-bi2+}  it follows that
	\begin{equation}\label{equ:5mnp1-is-ai2-bi2}
		5M + n_p + 1 = (a_{i_2} + b_{i_2} + 1) p^{i_2} .
	\end{equation}
	At this point, it only remains to patiently check four cases.
	
	If $b_{i_2} = 2$ then $2p^{i_2} \leq 3M$ by \eqref{equ:2M-3M-expansions}.
	This and \eqref{equ:bounds-ai2-bi2} imply that
	\begin{equation*}
		(a_{i_2} + b_{i_2} + 1) p^{i_2} \leq (1 + 2 + 1) p^{i_2} = 4 p^{i_2} \leq 6M \leq N ,
	\end{equation*}
	which combined with~\eqref{equ:5mnp1-is-ai2-bi2} yields \ref{ite:cond3}.
	The other cases proceed similarly (further references to \eqref{equ:2M-3M-expansions}, \eqref{equ:bounds-ai2-bi2}, and \eqref{equ:5mnp1-is-ai2-bi2} are omitted).
	
	If $b_{i_2} \leq 1$ and $a_{i_2} = 1$ then $p^{i_2} \leq 2M$ and so
	\begin{equation*}
		(a_{i_2} + b_{i_2} + 1) p^{i_2} \leq (1 + 1 + 1) p^{i_2} = 3 p^{i_2} \leq 6M \leq N .
	\end{equation*}
	If $b_{i_2} = 1$ and $a_{i_2} = 0$ then $p^{i_2} \leq 3M$ and so
	\begin{equation*}
		(a_{i_2} + b_{i_2} + 1) p^{i_2} \leq (0 + 1 + 1) p^{i_2} = 2 p^{i_2} \leq 6M \leq N .
	\end{equation*}
	If $b_{i_2} = 0$ and $a_{i_2} = 0$ then 
	\begin{equation*}
		(a_{i_2} + b_{i_2} + 1) p^{i_2} \leq (0 + 0 + 1) p^{i_2} = p^{i_2} \leq p^v \leq N .
	\end{equation*}	
	This proves \ref{ite:cond3}.
\end{proof}

The proof of Theorem~\ref{thm:main} is complete.

\section{Proof of Theorem~\ref{thm:optimal}}

Let $\varepsilon$ be an arbitrary positive real number.
It suffices to prove that 
\begin{equation}\label{equ:optimal-proof:1}
	a_N + b_N \leq \big( \tfrac{5}{6} + \varepsilon \big) N 
\end{equation}
for infinitely many positive integers $N$.
Let $q$ be a sufficiently large (depending on $\varepsilon$) prime number, put $N := 2(q - 1)$, and let $r$ be the minimal prime number greater than $N/3$.
A result of analytic number theory (see, e.g., the paper by Baker, Harman, and Pintz~\cite{MR1851081}) states that there exists an absolute constant $\theta \in (0, 1)$ such that for every sufficiently large real number $x$ the interval $\left[x, x + x^\theta\right]$ contains a prime number.
Hence 
\begin{equation}\label{equ:optimal-proof:2}
	r < \big(\tfrac{1}{3} + \varepsilon \big) N .
\end{equation}
By the hypotheses, there exists a polynomial $P \in S_N$ such that $x^{a_N}(1 - x)^{b_N}$ divides $P(x)$ in $\mathbb{Z}[x]$.
Note that $\int_0^1 P(x) \mathrm{d} x = \int_0^1 P(1 - x) \mathrm{d} x$.
Hence, without loss of generality, it is possible to assume that $a_N \leq b_N$.
Since $q < N < 2q < q^2$ and $2r < N < 3r < r^2$, from Lemma~\ref{lem:criterion} it follows that for each prime $p \in \{q, r\}$ there exists an integer $n_p$ such that $0 \leq n_p < N - a_N - b_N$ and
\begin{equation}\label{equ:optimal-proof:3}
	\nu_p\!\left((a_N + b_N + n_p + 1) \binom{a_N + b_N + n_p}{a_N}\right) \geq 1 .
\end{equation}
If $q$ divides $a_N + b_N + n_q + 1$ then, since
\begin{equation}\label{equ:optimal-proof:3-1}
	a_N + b_N + n_q + 1 \leq N < 2q ,
\end{equation}
it follows that $a_N + b_N + n_q + 1 = q$.
Consequently
\begin{equation*}
	a_N + b_N \leq q - 1 = \tfrac1{2} N ,
\end{equation*}
which implies \eqref{equ:optimal-proof:1}, as desired.

Similarly, if $r$ divides $a_N + b_N + n_r + 1$ then, since
\begin{equation}\label{equ:optimal-proof:3-2}
	a_N + b_N + n_r + 1 \leq N < 3r ,
\end{equation}
it follows that $a_N + b_N + n_r + 1 \in \{r, 2r\}$.
This and \eqref{equ:optimal-proof:2} imply that
\begin{equation*}
	a_N + b_N < 2r < \big(\tfrac{2}{3} + 2\varepsilon \big)N ,
\end{equation*}
which in turn implies \eqref{equ:optimal-proof:1}, as desired.

Suppose that $q$ does not divide $a_N + b_N + n_q + 1$ and that $r$ does not divide $a_N + b_N + n_r + 1$.
From \eqref{equ:optimal-proof:3} it follows that $q$ divides
\begin{equation*}
	\frac{(a_N + b_N + n_q + 1)!}{a_N! (b_N + n_q)!} .
\end{equation*}
Consequently, again by \eqref{equ:optimal-proof:3-1}, 
\begin{equation}\label{equ:optimal-proof:4}
	b_N \leq b_N + n_q \leq q - 1 = \tfrac1{2} N .
\end{equation}
Similarly, from \eqref{equ:optimal-proof:3} it follows that $r$ divides
\begin{equation*}
	\frac{(a_N + b_N + n_r + 1)!}{a_N! (b_N + n_r)!} .
\end{equation*}
Consequently, again by \eqref{equ:optimal-proof:3-2}, either: $a_N < r$ and $b_N + n_r < 2r$; or $a_N < 2r$ and $b_N + n_r < r$.
Recalling the inequalities $a_N \leq b_N$ and \eqref{equ:optimal-proof:2}, it follows that
\begin{equation}\label{equ:optimal-proof:5}
	a_N < r < \big(\tfrac1{3} + \varepsilon \big) N .
\end{equation}
Combining \eqref{equ:optimal-proof:4} and \eqref{equ:optimal-proof:5} yields \eqref{equ:optimal-proof:1}, as desired.

The proof is complete.

\providecommand{\bysame}{\leavevmode\hbox to3em{\hrulefill}\thinspace}
\providecommand{\MR}{\relax\ifhmode\unskip\space\fi MR }
\providecommand{\MRhref}[2]{%
	\href{http://www.ams.org/mathscinet-getitem?mr=#1}{#2}
}
\providecommand{\href}[2]{#2}

\addresseshere

\newpage
\appendix
\section*{Appendix}

The following \textsf{Mathematica} code checks that, letting $Q_N(x)$ be given by \eqref{equ:QN-2-3-19}, for every positive integer $N \leq 1000$ there exists a polynomial $P \in S_N$ such that $Q_N(x)$ divides $P(x)$ in $\mathbb{Z}[x]$.

\begin{lstlisting}
(* Initialize the list of possible exceptions. *)
ris = {};

Monitor[
 Do[
  (* Compute d_n = lcm(1,...,n). *)
  dn = LCM @@ Range[n];

  (* Define the degree of the polynomial q. *)
  m = n - IntegerPart[n/2] - IntegerPart[n/3] 
      - 2*IntegerPart[n/19] - 1;

  (* Coefficients a_0,...,a_m. *)
  coeff = Array[a, m + 1, 0];

  (* Polynomial q(x). *)
  q = Sum[coeff[[i + 1]]*x^i, {i, 0, m}];

  (* Find an integer solution. *)
  sol = FindInstance[
    Integrate[
      x^IntegerPart[n/2] *
      (1 - x)^IntegerPart[n/3] *
      (1 - x - x^2)^IntegerPart[n/19] * q,
      {x, 0, 1}
    ] == 1/dn,
    coeff, Integers
  ];

  (* If no solution is found, save n. *)
  If[sol == {}, AppendTo[ris, {n}]],

 {n, 3, 1000}
 ],
 n
];

Print[" "];

If[ris == {},
  Print["No exception."],
  (
   Print["Exceptions:"];
   Print[ris]
  )
];
\end{lstlisting}

\end{document}